\newtheorem{thm}{\bfseries Theorem}
\newtheorem{lem}[thm]{\bfseries Lemma}        
\newtheorem{prop}[thm]{\bfseries Proposition} 
\newtheorem{defn}[thm]{\bfseries Definition}
\def\set#1{\{#1\}}
\def\R{\mathbb R}
\begin{document}


\title{Nearest neighbor representations of Boolean functions}

\date{2020.~April}

\author
{
P\'eter Hajnal\thanks{
Supported by National Research, Development and Innovation
Office NKFIH Fund No. 129597, SNN-117879, 2018-1.2.1-NKP-2018-00004,
and SZTE, Smart Institute,
Hungary through grant TUDFO/47138-1/2019-ITM
of the Ministry for Innovation and Technology, Hungary.}\\
University of Szeged, Bolyai Institute\\
Aradi V\'ertan\'uk tere 1., Szeged, Hungary 6720
\and
Zhihao Liu\\
Brightloom, San Francisco, California
\and
Gy\"orgy Tur\'an\thanks{Supported by the National Research, Development and Innovation Office of Hungary through the Artificial Intelligence National Excellence Program (grant no.: 2018-1.2.1-NKP-2018-00008)}\\
University of Illinois at Chicago,\\
and Research Group on Artificial Intelligence of\\
the Hungarian Academy of Sciences at the University of Szeged\\
University of Szeged
}

\maketitle

\begin{abstract}
 A nearest neighbor representation of a Boolean function is a set
 of positive and negative prototypes in $\R^n$ such that the
 function has value 1 on an input iff the closest prototype is positive.
 For $k$-nearest neighbor representation the majority classification
 of the $k$ closest prototypes is considered. The nearest neighbor
 complexity of a Boolean function is the minimal number of
 prototypes needed to represent the function.
 We give several bounds for this measure. Separations are given
 between the cases when prototypes can be real or are required to be Boolean.
 The complexity of parity is determined exactly.
 An exponential lower bound is given for mod 2 inner product,
 and a linear lower bound is given for its $k$-nearest neighbor complexity.
 The results are proven using connections to other models
 such as polynomial threshold functions over $\{1, 2\}$.
 We also discuss some of the many open problems arising.
\end{abstract}

\section{Introduction}\label{intro}

A nearest neighbor representation of a classification of 
a set of points in $\R^n$ is given by a set of prototypes 
such that each point
belongs to the same class as the prototype closest to it.
More generally, for a $k$-nearest neighbor representation, 
the class containing
a point is determined by taking the most frequent 
class label among the $k$ closest prototypes.
Nearest neighbor representation is basic concept, 
much studied and used in computational geometry, 
machine learning,
pattern recognition and other areas. Case-based 
representation and case-based reasoning refer 
to the same idea but the notions of similarity 
used there are typically symbolic ones rather than geometric.

A general objective is to use as few prototypes as possible.
This leads to questions about the smallest number 
of prototypes representing a given classification.
We consider the special case of binary classifications 
of the $n$-dimensional hypercube.
A binary classification of the hypercube can be viewed 
as a Boolean function and therefore we use this terminology 
in the rest of the paper.

The question studied here is the following: 
what is the minimal number of prototypes needed 
to represent a Boolean function?
This quantity is referred to as the \emph{nearest neighbor complexity} 
of a Boolean function. The more 
general notion mentioned above is called the
$k$-\emph{nearest neighbor complexity} of a Boolean function. 
One can distinguish two types of nearest neighbor problems 
depending on whether the prototypes
must belong to $\{0, 1\}^n$, or can be arbitrary points in $\R^n$. 
In the former case we talk about 
\emph{Boolean nearest neighbor complexity}.

A basic example for the nearest neighbor representation 
of a Boolean function is the parity function. 
It follows directly from the definitions that 
in the Boolean nearest neighbor version 
every vertex has to be a prototype. On the other hand, 
in the general case the centers of the $n+1$ levels 
of the hypercube can serve as prototypes.
We show that this construction is optimal.

Linear threshold functions clearly can be 
represented with only two prototypes, but 
the Boolean prototype case is different. 
The majority function is an example 
with polynomial Boolean nearest neighbor complexity. 
On the other hand, the Boolean threshold function 
$TH_n^{\lfloor n/3 \rfloor}$ requires an 
exponential number of Boolean prototypes.

Regarding the nearest neighbor complexity of typical functions, 
it is shown that $O(2^n/n)$ prototypes are always sufficient, 
and almost all functions require $\Omega(2^{n/2}/n)$ prototypes. 
We also give an exponential lower bound for 
the nearest neighbor complexity of an explicit function: 
the mod 2 inner product function requires at least $2^{n/2}$ prototypes. 
For $k$-nearest neighbor complexity 
a linear lower bound is given for the same function.

Nearest neighbor complexity seems to be a natural 
complexity measure with many connections to other, 
well-studied complexity measures such as 
the complexity of threshold circuits, 
polynomial threshold functions and linear decision trees. 
In fact, the lower bounds for parity and mod 2 inner product 
turn out to follow directly from previous results on these measures. 
The parity lower bound is closely related to 
results of Basu \emph{et al.}~\cite{Basu2008}. 
The $\mod 2$ inner product lower bound is actually 
a special case of a result of Hansen and Podolskii~\cite{HansenP} 
(more details about this connection are given in the final section). 
Both \cite{Basu2008} and \cite{HansenP} discuss 
polynomial threshold functions over finite domains like $\{1, 2\}$ 
instead of $\{0, 1\}$.
The $k$-nearest neighbor lower bound follows from 
a result of Gr\"oger and Tur\'an~\cite{Groger1991} 
on linear decision trees (decision trees with linear function tests 
in the nodes). The relevance of these models for nearest neighbor problems 
is perhaps unexpected.

Nearest neighbor complexity and its relationship 
to other complexity measures raises many open problems. 
Some of these are formulated
in the final section of the paper\footnote{
This paper is a significantly updated version of a paper published with
the same title in the online, non-archival proceedings of
\emph{9th International Symposium on Artificial Intelligence
and Mathematics, ISAIM 2006}
(http://anytime.cs.umass.edu/aimath06/proceedings.html).
The present section \ref{open}
includes several new open problems, while some original questions
are solved in section ~\ref{sec:expl}.}.

\section{Related work}

General background on nearest neighbor methods and case based reasoning in machine learning, learning theory and computational geometry is given in Mitchell~\cite{Mitchelluj},
Hastie \emph{et al.}~\cite{Hastie2009}, Ben-David and Shalev-Shwartz~\cite{BenDavid2014} and Mulmuley~\cite{Mulmuleyuj}.  Recent work on these topics is discussed, for example, in
Luxburg and Bousquet~\cite{Luxburg04}, Klenk \emph{et al.}~\cite{Klenk11} and Anthony and Ratsaby~\cite{AnthonyR18}.

Nearest neighbors is an interesting topic in the context of deep learning as well. An important issue is interpretability: in several applications learned models need to be comprehensible for the user. Learning prototypes is considered to be interpretable in some contexts, and relevant for human cognition. Thus, for example, there are proposals to use deep learning to learn a set of prototypes (see, e.g., Li \emph{et al.}~\cite{RudinNN} and Zoran \emph{et al.}~\cite{Zoran2017}).

Wilfong~\cite{Wilfong1992} considers the problem of finding a minimal set of prototypes \emph{within} a set of prototypes for a given classification of a set.
The set to be found could be required to give the correct classification of the whole set, or only the original set of prototypes (in this case it is called a consistent subset). In the latter case the problem is the same as the one discussed in this paper. The problem studied is the computational complexity of the problem. Recent results include
Banerjee~\cite{Banerjee2018}, Gottlieb~\cite{Gottlieb2018} and Biniaz \emph{et al.}~\cite{Biniaz19}.

The paper closest to our work is
Salzberg \emph{et al.}~\cite{Salzberg1995}, considering nearest neighbor complexity for two-dimensional geometric concepts. Their motivation is that a minimal set of prototypes can be used by a helpful teacher to teach a concept, e.g., a convex polygon. This can also be viewed as the best case data set for learning with the nearest neighbor algorithm. They also mention other questions such as the stability of the set of prototypes, i.e., how adding more prototypes influences the representation.
The Boolean complexity background is discussed in Jukna~\cite{Jukna2012}. As noted in the introduction, we use results from Hansen and Podolskii~\cite{HansenP}, Basu \emph{et al.}~\cite{Basu2008} and Gr\"oger and Tur\'an~\cite{Groger1991}.

Globig and Lange~\cite{Globig1996} discuss nearest neighbor complexity for Boolean functions. They consider nearest neighbor representations for a \emph{class} of Boolean functions
for \emph{some} distance function (not necessarily a metric) which allows for a polynomial size representation of functions belonging to that class. Thus constructing an appropriate distance function becomes part of the problem. Satoh~\cite{Satoh98uj} looks at the Boolean case where the (dis)similarity of two Boolean vectors is the \emph{set} of coordinates where they differ, and $a$ is closer to $b$ than to $c$ if the difference of $a$ and $b$ is a \emph{subset} of the difference of $a$ and $c$ (thus this is a partial ordering).
Bengio \emph{et al.}~\cite{Bengio05} consider related questions for radial basis functions or Gaussian kernels.

\section{Preliminaries}

The Euclidean distance in $\R^n$ (resp., the Hamming distance in $\set{0, 1}^n$) is denoted by $d(a, b)$ (resp., $d_H (a, b)$).
For $a, b \in\set{0, 1}^n$ it holds that $d(a, b) = \sqrt{d_H (a, b)}$.
The componentwise partial order on $\set{0,1}^n$ is denoted by $a \leq b$.
The all-0 (resp., all-1) vector is denoted by ${\bf 0}$ (resp., ${\bf 1}$).
The weight of $a = (a_1 ,\ldots, a_n)\in\set{0,1}^n$ is $\sum_{i=1}^n a_i$.
If $a \leq b$ then we also say that $a$ is covered by $b$.
For a vector $a = (a_1 ,\ldots, a_n)\in\set{0,1}^n$, we write $a^{(i)}$ for
the vector obtained from $a$ by switching its $i$'th component, and we write $|a|$ for its weight, i.e., the number of its $1$ components.
Switching a component $1$ in $a$ to $0$ we get a lower neighbor of $a$.
Let $f:\set{0,1}^n \to \set{0,1}$ be a Boolean function.
Truth assignments $a \in \{0,1\}^n$ with $f(a)=1$ (resp., $f(a)=0$) are called
positive (resp., negative).

\begin{defn}
A \emph{nearest neighbor (NN)} representation of a Boolean function $f$ is a
pair of disjoint subsets $(P, N)$ of $\R^n$ such that for every $a \in \set{0, 1}^n$
\begin{itemize}
\item[$\bullet$]
if $a$ is positive then there exists $b\in P$ such that for every $c\in N$ it holds that
$d(a, b) < d(a, c)$,
\item[$\bullet$]
if $a$ is negative then there exists $b\in N$ such that for every $c\in P$ it holds that
$d(a, b) < d(a, c)$.
\end{itemize}

The points in $P$ (resp., $N$) are called positive (resp., negative) \emph{prototypes}.
The size of the representation is $|P\cup N|$.
The nearest neighbor complexity, $NN(f)$, of $f$ is the minimum of the sizes of the representations of $f$.
A nearest neighbor representation is \emph{Boolean} if $P\cup N\subseteq\set{0, 1}^n$, i.e., if the prototypes are Boolean vectors.
The minimum of the sizes of the Boolean nearest neighbor representations is denoted by $BNN(f)$.
\end{defn}

If $f$ is non-constant then $NN(f) \ge 2$ as at least one positive and negative prototype is needed.
 For every $n$-variable Boolean function it holds that
 \begin{equation} \label{eq:alap}
NN(f)\leq BNN(f)\leq 2^n.
\end{equation}
Here the first inequality follows from the definitions and the second inequality follows from using all truth assignments as prototypes.

Nearest neighbor representations can be generalized to $k$-nearest neighbor representations.

\begin{defn}
A $k$-nearest neighbor ($k\text{-}NN$) representation of $f$ is a pair of disjoint subsets $(P, N)$ of $\R^n$,
such that for every $a \in \set{0, 1}^n$ it holds that

\begin{itemize}
\item[$\bullet$]
$a$ is positive iff at least $\frac{k}{2}$ of the $k$ points in $P \cup N$ closest to $a$ belong to $P$.
\end{itemize}

It is assumed that for every $a$, the $k$ smallest
distances of $a$ from the prototypes are all smaller than the other $|P\cup N|-k$ distances from the prototypes.
Thus the case $k = 1$ is the same as the nearest neighbor representation.
The size of the representation is again $|P\cup N|$.
The $k$-nearest neighbor complexity, $k\text{-}NN(f)$,
of $f$ is the minimum of the sizes of the $k$-nearest neighbor representations of $f$.
\end{defn}

\section{Boolean nearest neighbors} \label{sec:boo}

In this section we discuss some basic examples showing that there can be exponential gaps in the inequalities in (\ref{eq:alap}).
For the parity function there is an exponential gap in the first inequality, and for the majority functions there is an exponential gap in the second inequality.

A Boolean function is \emph{symmetric} if its value depends only on the weight of its input. A symmetric function $f$ can be specified by a set $I_f \subseteq \{0, \ldots, n\}$ such that $f(a) = 1$ iff $|a| \in I_f$.

\begin{prop} \label{pr:szim}
\begin{itemize}
\item[a)]
For every $n$-variable symmetric function $f$ it holds that $NN (f )\leq n + 1$.
\item[b)]
$BN N (x_1\oplus x_2\oplus\ldots\oplus x_n)= 2^n$ .
\end{itemize}
\end{prop}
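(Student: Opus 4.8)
The plan is to treat the two parts by different means: part (a) by exhibiting an explicit $(n+1)$-prototype representation, and part (b) by combining the trivial upper bound from (\ref{eq:alap}) with a forcing argument showing that no vertex may be omitted.

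For part (a), I would use the centroids of the levels of the cube as prototypes. For $j \in \set{0, 1, \ldots, n}$ set $c_j = (j/n, \ldots, j/n) \in \R^n$, the average of the weight-$j$ vectors, and put $c_j$ into $P$ if $j \in I_f$ and into $N$ otherwise. The crux is the distance computation: for an input $a$ of weight $w$,
\[
 d(a, c_j)^2 = w\left(1 - \tfrac{j}{n}\right)^2 + (n - w)\left(\tfrac{j}{n}\right)^2 = w - \frac{2wj}{n} + \frac{j^2}{n}.
\]
As a function of the integer $j$ this is a strictly convex quadratic minimized at $j = w$, so $c_w$ is the unique nearest prototype to $a$ among $c_0, \ldots, c_n$. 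Since $f(a) = 1$ exactly when $w \in I_f$, i.e. exactly when $c_w \in P$, the nearest prototype carries the correct label and the strict inequality makes $(P, N)$ a valid NN representation; hence $NN(f) \le n + 1$. The only thing to verify is the elementary convexity above, which is no real obstacle.

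For part (b) the upper bound $BNN \le 2^n$ is immediate from (\ref{eq:alap}) (equivalently, every vertex may serve as its own prototype). For the matching lower bound I would prove that in any Boolean representation $(P, N)$ of parity every vertex is a prototype. Suppose some vertex $a$ is not a prototype; without loss of generality $a$ is positive. Correctness at $a$ forces $\min_{b \in P} d(a, b) < \min_{c \in N} d(a, c)$, so the nearest prototype to $a$ is some $b \in P$, at Hamming distance $r := \min_{b' \in P} d_H(a, b') \ge 1$, while every negative prototype lies at Hamming distance at least $r + 1$ from $a$. (When $r = 1$ the chosen $b$ is itself a neighbor of $a$, hence a negative vertex placed in $P$, which is already misclassified, so we may assume $r \ge 2$.)

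Now move one step from $a$ toward $b$: pick a coordinate $i$ on which $a$ and $b$ differ and set $a' = a^{(i)}$, so $d_H(a', b) = r - 1$. Since $a'$ is a neighbor of $a$ it has the opposite parity, hence $a'$ is negative. Correctness at $a'$ then forces a negative prototype $q$ with $d(a', q) < d(a', b)$, i.e. $d_H(a', q) \le r - 2$, and the triangle inequality gives $d_H(a, q) \le 1 + (r - 2) = r - 1 < r$. Thus $q$ is a negative prototype strictly nearer to $a$ than $b$, contradicting the minimality of $r$. Therefore every vertex must be a prototype and $BNN = 2^n$. I expect the main obstacle to lie here: getting the parity-and-distance bookkeeping of this forcing argument exactly right, in particular the combination of the triangle inequality with the fact that flipping one coordinate reverses parity, which is precisely the feature of parity that makes omitting a vertex impossible.
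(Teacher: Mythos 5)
Your proposal is correct in both parts. Part (a) coincides with the paper's own argument: it uses exactly the same level-centroid prototypes $(\ell/n,\ldots,\ell/n)$; the paper verifies that $c_w$ is strictly nearest via the geometric observation that the hyperplane $\sum_{i=1}^n x_i = w$ is perpendicular to the diagonal line $(t,\ldots,t)$, while you verify it by the explicit convexity computation --- a cosmetic difference only.

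For part (b) your route is genuinely different from the paper's, though both exploit the same feature of parity, namely that flipping one bit reverses the label. The paper argues by propagation: if $p$ is a positive prototype and $a$ is a neighbor of $p$, then $a$ is a negative point with a positive prototype at distance $1$, so the only way $a$ can be correctly classified is for $a$ itself to be a negative prototype (a strictly closer negative prototype would have to be at distance less than $1$, hence $0$). Thus the prototype set is closed under taking neighbors; since it is nonempty and the hypercube is connected, every vertex is a prototype. You instead argue by contradiction with an extremal/descent scheme: assuming a non-prototype vertex $a$ exists, you take the nearest positive prototype $b$ at Hamming distance $r$, dispose of $r=1$ by noting that a prototype must classify itself, and for $r \geq 2$ step one coordinate toward $b$ and use the triangle inequality to manufacture a negative prototype at distance at most $r-1$ from $a$, contradicting correctness at $a$. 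Your version costs a case split and the triangle-inequality bookkeeping that the paper's closure-plus-connectivity formulation avoids, but it has the mild advantage of being entirely local around the hypothetical non-prototype vertex, with no appeal (even implicit) to connectivity of the hypercube.
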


\begin{proof}
For part \emph{a)}, consider prototypes $p_\ell= (\frac{\ell}{n},\ldots,\frac{\ell}{n})$, for $\ell= 0,\ldots, n$.
If $a \in \set{0, 1}^n$ has weight $w$ then $d(a, p_w ) < d(a, p_\ell)$ for every $\ell\not= w$,
as the hyperplane $\sum_{i=1}^n x_i = w$ is perpendicular to the line $(t, \ldots, t)$.
Thus $(P, N)$ with $P = \set{p_\ell : \ell \in I_f}$ and $N = \set{p_\ell : \ell \not\in I_f}$
is a nearest neighbor representation of size $n + 1$.

For part \emph{b)}, consider a Boolean nearest neighbor representation of the parity function and
let $p$ be a positive prototype.  If $a$ is a neighbor of $p$ then $a$ is
negative, but there is a positive prototype at distance $1$ from $a$.
Hence $a$ must itself be a negative prototype. Repeating this argument
it follows that every point is a prototype.
\end{proof}

In Section~\ref{sec:expl} we show that the upper bound of part \emph{a)} is sharp for the parity function.

A Boolean function $f$ is a \emph{threshold} function if there are weights
$w_1,\ldots, w_n \in\R$ and a threshold $t\in\R$
such that for every $x\in\set{0,1}^n$
it holds that $f (x) = 1$ iff $w_1 x_1 + \ldots + w_n x_n\geq  t$. The
special case when $w_1 = \ldots = w_n = 1$ is denoted by $TH_n^t$ . In
particular, when $t =\frac{n}{2}$, we get
the $n$-variable majority function $MAJ_n (x)$.

\begin{thm}
\begin{itemize}
\item[a)]
For every threshold function $f$ it holds that $NN (f ) = 2$.
\item[b)]
If $n$ is odd then $BNN (MAJ_n ) = 2$ and
if $n$ is even then $BNN (MAJ_n )\leq\frac{n}{2}+2$.
\item[c)]
$BNN\left(TH_n^{\lfloor n/3 \rfloor}\right)=2^{\Omega(n)}$.
\end{itemize}
\end{thm}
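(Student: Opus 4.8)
For part (a) I would place the two prototypes symmetrically about the separating hyperplane. Writing $f(x)=1 \iff w\cdot x\ge \tau$ and assuming $f$ non-constant (a constant function is represented by a single prototype), the finitely many numbers $w\cdot x$ leave a gap, so I pick $c$ strictly between $\max\set{w\cdot x : f(x)=0}$ and $\min\set{w\cdot x : f(x)=1}$. Choosing any $m$ with $w\cdot m=c$ and setting the positive prototype $p=m+\varepsilon w$ and the negative prototype $q=m-\varepsilon w$, the identity $d(x,p)^2-d(x,q)^2=-4\varepsilon\, w\cdot(x-m)$ shows that $x$ is closer to $p$ iff $w\cdot x>c$, so $(\set{p},\set{q})$ represents $f$ with strict inequalities. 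Since $f$ is non-constant, $NN(f)\ge 2$, hence $NN(f)=2$.

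For part (b) with $n$ odd I take $P=\set{\mathbf 1}$, $N=\set{\mathbf 0}$: a weight-$w$ input satisfies $d_H(\cdot,\mathbf 1)=n-w$ and $d_H(\cdot,\mathbf 0)=w$, so it is closer to $\mathbf 1$ iff $w>n/2$, which is exactly $MAJ_n$ (no ties when $n$ is odd). For $n$ even the two poles tie on the middle layer $|x|=n/2$, so the plan is to keep $N=\set{\mathbf 0}$ but replace $\mathbf 1$ by the weight-$(n-1)$ prototypes $\overline{\set{i}}$ (the complement of the singleton $\set{i}$) for $i$ in a fixed set $T$ with $|T|=n/2+1$. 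I then check that every input of weight $<n/2$ stays strictly closer to $\mathbf 0$, while a middle input $x$ has some $i\in T\setminus x$ (as $|T|>|x|$), and $\overline{\set{i}}\supseteq x$ lies at distance $n/2-1<n/2=d_H(x,\mathbf 0)$, so $x$ is correctly positive. This uses $|T|+1=n/2+2$ prototypes.

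The crux is part (c). The first step is a reduction that linearizes the distances. Identifying each vector with its support, $d_H(a,z)=|a|+|z|-2|a\cap z|$, so after cancelling the common term $|a|$ an input $a$ is positive iff $\min_{p\in P}v_p(a)<\min_{q\in N}v_q(a)$, where $v_z(a):=|z|-2|z\cap a|$; thus each comparison is a $\set{-1,0,1}$-weighted linear test, and flipping one coordinate changes $v_z$ by exactly $\pm 2$. I would then prove a structural lemma about the two boundary layers $|a|=t$ (positive) and $|a|=t-1$ (negative) with $t=\lfloor n/3\rfloor$: a positive $X$ with a minimizing prototype $p_X$ satisfies $X\subseteq p_X$, and a negative $M$ served by some $q_M$ satisfies $q_M\subseteq M$. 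The argument compares $v_z$ at $X$ and at a lower neighbour $M=X\setminus\set{j}$ and plays the two defining inequalities (the positive prototype beating all negatives at $X$, the negative prototype beating all positives at $M$) against each other; because a coordinate flip moves each $v_z$ by exactly $2$, the strict inequalities force the containments and, sharpening the same estimates, pin down the exact weight relation $|p_X|=2t-1-|q_M|$, so in particular $|p_X|\le 2t-1$.

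The lower bound then follows by a counting step. Every weight-$t$ input has a minimizing positive prototype that contains it and has weight at most $2t-1$, and one such prototype contains at most $\binom{2t-1}{t}$ weight-$t$ inputs, so $|P|\ge \binom{n}{t}/\binom{2t-1}{t}$; for $t=\lfloor n/3\rfloor$ this is $2^{\Omega(n)}$, since $\binom{n}{n/3}$ grows like $2^{H(1/3)n}$ while $\binom{2n/3}{n/3}$ grows only like $2^{2n/3}$ and $H(1/3)>2/3$. I expect the main obstacle to be the structural lemma: obtaining the containments and especially the exact identity $|p_X|=2t-1-|q_M|$ requires carefully exploiting that each test is $\set{-1,0,1}$-weighted and that a single flip moves $v_z$ by exactly $2$, so that the two strict inequalities cannot coexist unless the prototype weights line up precisely; once that rigidity is in hand, the entropy gap $H(1/3)>2/3$ does the rest.
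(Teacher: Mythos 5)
Your proposal is correct and takes essentially the same route as the paper's proof: the same two symmetric prototypes for part (a), the same $\mathbf{0}$-plus-weight-$(n-1)$-prototypes construction for part (b), and for part (c) the same structural lemma (weight-$t$ inputs are contained in their nearest positive prototypes, nearest negative prototypes are contained in weight-$(t-1)$ inputs) followed by the same $\binom{n}{t}/\binom{2t}{t}$-type counting, with your $v_z$ linearization being only a cosmetic reformulation of the Hamming-distance comparisons. One remark: the exact identity $|p_X|=2t-1-|q_M|$ you flag as the crux does in fact hold (both strict integer inequalities $d_H(M,q_M)<d_H(M,p_X)$ and $d_H(X,p_X)<d_H(X,q_M)$ combine with the containments $q_M\subseteq M\subseteq X\subseteq p_X$ to force equality), so it is a correct, slightly sharper statement than the paper's bound $|p|<2t$ — but only the inequality $|p_X|\le 2t-1$ is actually needed for the counting.
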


\begin{proof}
Part \emph{a)} follows by taking a single positive, resp. negative,
prototype, on a line perpendicular to the hyperplane defining the
threshold function, at equal distances from the hyperplane. (It may be assumed \emph{w.l.o.g.} that the hyperplane does not contain any point from $\{0, 1\}^n$.)

Part \emph{b)} is obtained for odd $n$ by taking the all $0$, resp. all $1$,
vectors as prototypes. In the even case let the all $0$ vector be the
single negative prototype, and select arbitrary $(n/2) + 1$
truth assignments of weight $n-1$ as positive prototypes.
Then every truth assignment $a$ of weight $n/2$ shares a $0$ component with some positive prototype.
Their Hamming distance is $(n/2)-1$, and so this prototype is closer to $a$ than the all $0$ vector.
It is easy to check that if $x$ has weight different from $n/2$,
then the prototype closest to it has the right label.

For part \emph{c)}, let $t = \lfloor n/3 \rfloor$ and consider Boolean prototypes $P, N \subseteq \set{0, 1}^n$ for $TH_n^t$.
Let $a$ be a truth assignment of weight $t$, and $p$ be a positive prototype closest to $a$. We claim that $a \leq p$.
Otherwise assume that $a_i = 1$, $p_i = 0$ and consider the negative truth assignment $b = a^{(i)}$.
Let a negative prototype closest to $b$ be $q$.  Then
$d_H (a, p) = d_H (b, p) + 1 > d_H (b,q) + 1$.
On the other hand
$d_H (a, p) < d_H (a, q) \leq d_H (b, q) + 1$, a
contradiction.

It follows similarly that if $b$ is a truth assignment of weight $t-1$ and $q$ is a
negative prototype closest to $b$ then $q \leq b$. This implies that for
every truth assignment $a$ of weight $t$ there is a negative prototype $q$ such that
$q\leq a$ (a prototype closest to a lower neighbor of $a$ will have this
property). Thus for every truth assignment $a$ of weight $t$ it holds that
$d_H(a,q) \leq t$ for some negative prototype $q$.
This means that if $p$ is a positive prototype closest to $a$ then $d_H(a,p) < t$ and so $|p| < 2t$.

Consider now the set of truth assignments of weight $t$. Each is covered by a
positive prototype of weight less than $2t$. Each such positive
prototype can cover at most $\binom{2t}{t}$ truth assignments of weight $t$.
Hence we need at least
\[
\frac{\binom{n}{t}}{\binom{2t}{t}}=2^{\Omega(n)}
\]
positive prototypes.
\end{proof}

The argument of part c) generalizes to every function $TH^t_n$, where
$|t-\frac{n}{2}|\geq\delta n$ for any fixed $\delta>0$.

\section{Largest nearest neighbor complexity}

In this section we consider Shannon-Lupanov type bounds.
The first bound shows that the upper bound of (1) for nearest neighbor complexity can be improved asymptotically by a factor of $1/n$.

\begin{thm} \label{th:shau}
For every n-variable Boolean function it holds that
\[ 
NN(f)\leq(1 + o(1))\frac{2^{n+2}}{n}. 
\]
\end{thm}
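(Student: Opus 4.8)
The plan is to beat the trivial bound $NN(f)\le 2^n$ of (\ref{eq:alap}) by a Shannon--Lupanov counting argument in which each prototype serves a whole block of inputs at once. The basic gadget is the \emph{center} of an axis-parallel subcube: if $C\subseteq\set{0,1}^n$ is obtained by fixing the coordinates in a set $F$ to values $c_i$ and letting the other $d=n-|F|$ coordinates run freely, let $p_C$ have $i$-th coordinate $c_i$ for $i\in F$ and $\tfrac12$ on the free coordinates. For every $a\in\set{0,1}^n$ one has the identity
\[
d(a,p_C)^2=\bigl|\set{i\in F : a_i\neq c_i}\bigr|+\tfrac{d}{4},
\]
so $p_C$ lies at squared distance $\tfrac{d}{4}$ from every point of $C$ and at squared distance at least $1+\tfrac{d}{4}$ from every point outside $C$. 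Hence, if $\set{0,1}^n$ is partitioned into subcubes of one common free dimension $d$, each monochromatic for $f$, then the centers $p_C$ (labelled by the value of $f$ on $C$) already form a valid representation, since every input is at squared distance $\tfrac{d}{4}$ from its own center and strictly farther from all others. More generally, for \emph{small} $d$ (say $d\le 3$, where $\tfrac{d}{4}<1$) one can take all $2^{n-d}$ centers with their majority labels and then override each minority point by a Boolean prototype placed at that point: such an override sits at squared distance $0$ from its own point and at least $1>\tfrac{d}{4}$ from every other input, so it never disturbs the centers. This is the part I expect to go through routinely, and it already yields a constant-factor improvement over $2^n$.

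The $1/n$ saving forces $d$ up to $d\approx\log_2 n$, so that there are only $2^{\,n-d}\approx 2^n/n$ blocks, each carrying about $n$ inputs; this is exactly the regime in which the number of blocks matches the target order, and the $n$ real coordinates available per prototype are just enough freedom to encode the roughly $2^n$ table entries. The difficulty is that at this size a single center no longer captures the arbitrary $d$-variable restriction $f|_C$, while the constant-$d$ override trick breaks down completely: the serving distance $\tfrac{d}{4}\approx\tfrac{\log_2 n}{4}$ now exceeds $1$, so a Boolean override placed for one block becomes closer than the center of a neighbouring block and corrupts it. The plan is therefore to realise each block restriction not by overrides but by a family of genuinely \emph{real} prototypes whose pairwise bisecting hyperplanes carve the $\approx n$ points of the block into monochromatic Voronoi cells, exploiting the full $n$-dimensional freedom, and to separate blocks by displacing these prototypes slightly toward the corner of the fixed coordinates of their block (pushing $c_i=1$ to $1+\delta$ and $c_i=0$ to $-\delta$) so as to open a moat of squared-distance gap to every foreign block.

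The hard part will be this global verification: unlike gates in a circuit, prototypes interact only through distances, so one must show that the displaced, interacting families can be chosen uniformly across all $2^{\,n-d}$ blocks so that every input is strictly closer to a correctly labelled prototype than to all others. The identity above is the lever here, since it reduces non-interference to keeping all serving squared distances within an additive constant of one another and making the moat penalty exceed $\tfrac{d}{4}$; balancing the moat size $\delta$ against the self-distance cost is the delicate point. Once this is established, one counts the prototypes by summing over the $2^{\,n-d}$ blocks, treats $f^{-1}(1)$ and $f^{-1}(0)$ separately, optimises the block size $d$, and absorbs the lower-order terms, with the two classes together with the dimensional slack accounting for the constant $4$ and the $(1+o(1))$ factor in $NN(f)\le(1+o(1))\,\tfrac{2^{n+2}}{n}$.
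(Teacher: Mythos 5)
Your warm-up ($d\le 3$ with Boolean overrides) is correct, but it only buys a constant factor, and the step you defer as ``the hard part'' is not merely delicate --- it is impossible, so the whole plan fails. Once you take $d\approx\log_2 n$ so that the number of blocks is $\approx 2^n/n$, the restriction of $f$ to a block is an \emph{arbitrary} Boolean function of $d$ variables, and your moat structure forces each block to be served by its own family of prototypes. The ambient $n$-dimensional freedom does not help as much as you hope: a prototype off the block's $d$-dimensional affine hull acts on the block only through its orthogonal projection together with an additive constant (the squared distance to the hull), i.e.\ through a weighted Voronoi (power) diagram inside the hull. Comparing two such prototypes is still a single hyperplane test on the block, so the hyperplane-counting argument of Theorem~\ref{th:shal} applies \emph{inside each block}: $m$ prototypes realize at most $2^{O(d^2 m^2)}$ of the $2^{2^d}=2^n$ possible restrictions. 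In particular, a constant (or even polylogarithmic) number of prototypes per block can represent only $2^{O(\mathrm{polylog}\, n)}$ restrictions, so for a random $f$ almost every block needs $\Omega(2^{d/2}/d)=\Omega(\sqrt n/\log n)$ prototypes, and your total is at least $2^{n-d}\cdot\Omega(2^{d/2}/d)=\Omega\bigl(2^n/(\sqrt n\log n)\bigr)$, which exceeds the target $(1+o(1))2^{n+2}/n$ by a factor of order $\sqrt n/\log n$. No choice of $d$ escapes this trade-off, and even attaining $2^{d/2}/d$ per block would require a per-block upper bound that is itself an open problem (it is exactly the gap between Theorems~\ref{th:shau} and~\ref{th:shal}).

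The missing idea is to partition not into subcubes but into Hamming balls of radius one, which have exactly the geometry your subcubes lack. The paper covers $\{0,1\}^n$ with $(1+o(1))2^n/n$ balls of radius one (Kabatiansky--Panchenko~\cite{Kabat1988}) and uses the following lemma: if $A$ is any subset of a radius-one sphere $S_c$ with $|A|=\ell\ge 3$, then its centroid $c_A$ satisfies $d(c_A,a)=\sqrt{(\ell-1)/\ell}<1$ for all $a\in A$, while $d(c_A,a)\ge 1$ for every Boolean point outside $A\cup\{c\}$. Thus \emph{whatever} $f$ does on a ball, three prototypes suffice for it: the centroid of its positive non-center points, the centroid of its negative non-center points, and the center itself (with trivial adjustments when a class has fewer than three points). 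The reason this works for spheres of radius one and not for $\log n$-dimensional subcubes is that the $n$ neighbors of a center are affinely independent (simplex-like), so any subset of them can be split off by a single point, whereas the $2^d$ points of a $d$-subcube span only a $d$-dimensional affine space and carry restrictions as hard as arbitrary $d$-variable functions. Summing at most a constant number of prototypes per ball over $(1+o(1))2^n/n$ balls gives the claimed $(1+o(1))2^{n+2}/n$.
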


\begin{proof}
A set $B_c\subseteq\set{0,1}^n$ is a ball of radius one with center $c$ if it
consists of $c \in \set{0,1}^n$ (the center of the ball) and all its neighbors.
A set $S_c\subseteq\set{0,1}^n$ is a sphere of radius one  with center $c$ if it consists of all the neighbors of $c\in\set{0,1}^n$.

\begin{lem}
Let $A$ be a subset of a sphere $S_c$ of radius one with $|A|=\ell\geq 3$, and let
$c_A=\frac{1}{|A|}\sum_{x\in A} x$ be the centroid of $A$. Then
\begin{itemize}
\item[a)]
$d(c_A, a) < 1$ for every $a\in A$,
\item[b)]
$d(c_A, a) \geq 1$ for every $a \in \{0,1\}^n \setminus (A \cup \{c\})$.
\end{itemize}
\end{lem}

\begin{proof}
Assume \emph{w.l.o.g.} that $S$ has center ${\bf 0}$, and $A$ consists of the first $\ell$ unit vectors.
Then
$c_A = (\frac{1}{\ell},\ldots,\frac{1}{\ell},0,\ldots,0)$,
where the first $\ell$ coordinates are nonzero.
If $a\in A$ then
\[ 
d^2(c_A, a) = \left(\frac{\ell-1}{\ell}\right)^2 + 
(\ell-1)\left(\frac{1}{\ell}\right)^2 = 
\frac{\ell-1}{\ell} < 1.
\]
If $a \in \{0,1\}^n \setminus (A \cup \{c\})$ then if $a$ has a $1$ component in the last $n-\ell$ coordinates then $d(c_A,a)\geq 1$.
Otherwise $a$ has at least two $1$'s in the first $\ell$ coordinates and so as $\ell\geq 3$ it holds that
\[
d^2(c_A,a)\geq2\left(\frac{\ell-1}{\ell}\right)^2 
+(\ell-2)\left(\frac{1}{\ell}\right)^2
=2-\frac{3}{\ell}\geq 1.
\]
\end{proof}

Partition $\set{0,1}^n$ into subsets $A_1,\ldots,A_s$ such that each $A_i$ is a subset of some ball $B_i$ of radius
one with center $c_i$, and let $A_i^1$
(resp.~$A_i^0$) be the set of points $x \not= c_i$ in $A_i$
with $f(x) = 1$ (resp.~f(x) = 0). In each $A_i$ pick the following prototypes:
\begin{itemize}
\item[$\bullet$]
if $|A_i^1|\geq 3$ then let $c_{A_i^1}$
be a positive prototype, otherwise let the element(s) of $A_i^1$ be positive prototypes,
\item[$\bullet$]
if $|A_i^0|\geq 3$ then let $c_{A_i^0}$
be a negative prototype, otherwise let the element(s) of $A_i^0$ be negative prototypes,
\item[$\bullet$]
if the center $c_i\in A_i$ then let $c_i$ be a prototype with label $f(c_i)$.
\end{itemize}
The correctness of this set of prototypes follows from Lemma 4. The theorem then follows from
the result that $\set{0,1}^n$ can be covered with
$(1 + o(1))\frac{2^n}{n}$ balls of radius one (Kabatyansky and Panchenko~\cite{Kabat1988}), generalizing Hamming codes.
\end{proof}

As the next result shows, almost all $n$-variable functions have exponential complexity.

\begin{thm} \label{th:shal}
For almost all n-variable Boolean functions
\[
NN(f) >\frac{2^{n/2}}{n}.
\]
\end{thm}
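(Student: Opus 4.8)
The plan is to prove the lower bound by a counting (Shannon--Lupanov) argument: I will show that the number of Boolean functions admitting a nearest neighbor representation with at most $m = \lfloor 2^{n/2}/n\rfloor$ prototypes is a vanishing fraction of all $2^{2^n}$ Boolean functions. Once this is established, almost all functions must satisfy $NN(f) > 2^{n/2}/n$, as claimed.

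First I would parametrize representations. A representation with $m$ prototypes is a point $(p_1,\dots,p_m) \in \R^{mn}$ together with a labeling $\lambda \in \set{+,-}^m$ recording which prototypes are positive. For a fixed truth assignment $a \in \set{0,1}^n$ and a pair $i < j$, the comparison of $d(a,p_i)$ and $d(a,p_j)$ is governed by the sign of
\[
g_{a,i,j}(p_1,\dots,p_m) = \|p_i\|^2 - \|p_j\|^2 - 2\langle a,\, p_i - p_j\rangle,
\]
which is a polynomial of degree $2$ in the $mn$ coordinates of the prototypes. The sign pattern of all these polynomials determines, for every $a$, the full ordering of its distances to the prototypes (up to ties), and hence, together with $\lambda$, the value $f(a)$. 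Therefore the number of functions represented by some placement of $m$ prototypes is at most $2^m$ times the number of distinct sign patterns of the family $\set{g_{a,i,j}}$.

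Next I would bound the number of sign patterns. The family consists of $N = 2^n\binom{m}{2} \le 2^n m^2$ polynomials, each of degree $d = 2$, in $D = mn$ real variables, and $N \ge D$ holds for the relevant range. By the standard bound on the number of sign conditions of a system of real polynomials (Warren's theorem, via Milnor--Thom), this number is at most $\bigl(C\, dN/D\bigr)^{D} = \bigl(C' 2^n m / n\bigr)^{mn}$ for absolute constants $C, C'$. Taking logarithms (base $2$) and substituting $m = \lfloor 2^{n/2}/n\rfloor$, so that $mn \le 2^{n/2}$ and $\log_2(2^n m/n) = \tfrac{3n}{2} - 2\log_2 n + O(1)$, gives
\[
\log_2\!\Bigl(2^m \bigl(C' 2^n m/n\bigr)^{mn}\Bigr) \le \tfrac{3n}{2}\,2^{n/2}\,(1+o(1)) = o(2^n).
\]
Hence at most $2^{o(2^n)}$ functions have $NN(f)\le m$, which is an $o(1)$ fraction of the $2^{2^n}$ Boolean functions, proving the theorem.

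The routine parts are the asymptotic estimate and the reduction to sign patterns; the one step that carries the real content is the second paragraph's observation that the representation map factors through a bounded number of polynomial sign conditions, which is what lets the continuum of real prototype placements be counted at all. I expect the main obstacle to be stating the sign-pattern bound in exactly the form needed and checking that degree $2$ (rather than $1$) does not spoil the estimate --- it does not, since the degree enters only as a constant factor inside the base, which the $2^n$ term dominates.
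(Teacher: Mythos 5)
Your proof is correct, and it takes a genuinely different counting route from the paper's. Both arguments rest on the same reduction: after ties are removed, the function represented by labeled prototypes $p_1,\dots,p_m$ is determined by the signs of $d^2(a,p_i)-d^2(a,p_j)$ over all $a\in\{0,1\}^n$ and $i<j$, together with the labeling. The paper then counts \emph{per pair}: each pair $(i,j)$ induces, via the bisector hyperplane $H_{p_i,p_j}$, a dichotomy of the cube, i.e., a linear threshold function of $n$ variables, of which there are at most $2^{n^2}$; hence at most $2^{n^2\binom{m}{2}}$ functions are representable, which is $o\left(2^{2^n}\right)$ for $m=2^{n/2}/n$. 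You instead count \emph{globally in parameter space}, applying Warren's theorem to the $2^n\binom{m}{2}$ degree-$2$ comparison polynomials in the $mn$ prototype coordinates. The paper's route is more elementary (it needs only the classical bound on the number of threshold functions), but it treats the $\binom{m}{2}$ bisector dichotomies as though they could vary independently; your bound exploits the fact that all of them are governed by only $mn$ real parameters, and is accordingly far smaller ($2^{O(n2^{n/2})}$ versus the paper's $2^{2^{n-1}}$). Indeed, if you rerun your computation with $m\approx 2^n/(n^2\log n)$ you still get $2^{o(2^n)}$ representable functions, hence a lower bound of that order for almost all functions; this nearly matches the upper bound of Theorem~\ref{th:shau} and essentially addresses the gap posed as an open problem in Section~\ref{open}, so you may want to state that stronger conclusion. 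Two routine points should be made explicit: (i) Warren's theorem bounds the number of \emph{strict} sign patterns, so, as the paper does, first perturb the prototypes to remove ties (the inequalities defining a representation are strict, hence survive small perturbations), or invoke the Basu--Pollack--Roy bound on all realizable sign conditions; (ii) to cover functions having representations with fewer than $m$ prototypes, either pad such a representation with prototypes placed far from the cube (these never become nearest neighbors) or sum your bound over all sizes $m'\le m$, at the cost of a negligible factor.
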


\begin{proof}
Consider a set of prototypes $p_1,\ldots,p_m$ for some function $f$.
By slightly perturbing the points if necessary, it may be assumed \emph{w.l.o.g.} that
$d(x,p_i)\not=d(x,p_j)$ for every $x\in\set{0,1}^n$ and $1\leq i<j\leq m$.
The distances $d(x,p_i)$ and $d(x,p_j)$ can be compared by considering the
hyperplane $H_{p_i,p_j}$ going through the midpoint of the segment $(p_i,p_j)$, perpendicular to the segment,
and determining on which side of the hyperplane $x$ lies.
If for another set of prototypes $q_1,\ldots,q_m$ (again, without ties), the hyperplanes $H_{q_i,q_j}$
determine the same dichotomy of $\set{0,1}^n$ as $H_{p_i,p_j}$ for every $1\leq i<j\leq m$,
then $q_1,\ldots,q_m$ are prototypes for the same function $f$.

Hyperplanes can realize at most $2^{n^2}$ dichotomies of $x$ and thus $m$
prototypes can realize at most
\begin{equation} \label{eq:bino}
2^{n^2\binom{m}{2}}
\end{equation}
$n$-variable Boolean functions.
If a function can be realized with less than $m$ prototypes then it can also be realized with $m$ prototypes.
A direct calculation shows that for $m = \frac{2^{n/2}}{n}$ the quantity (\ref{eq:bino}) is $o\left(2^{2^n}\right)$.
\end{proof}

One actually gets the same bound for $k$-nearest neighbors as well.
The only difference in the proof is that a set of $m$ prototypes can represent $m$ different functions for different values of $k$.
Thus the upper bound (2) has to be multiplied by $m$, but the same bound remains valid.

\begin{thm}
For almost all $n$-variable Boolean functions $f$ it holds that for every $k$
\[ 
k\text{-}NN(f) > \frac{2^{n/2}}{n}.
\qed \]
\end{thm}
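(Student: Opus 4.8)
The plan is to reuse the counting argument of Theorem~\ref{th:shal} almost verbatim, the only new ingredient being a small piece of bookkeeping about the extra freedom the parameter $k$ introduces. First I would recall the crux of that argument: for prototypes $p_1,\dots,p_m$ (perturbed so that there are no distance ties), the comparison of $d(x,p_i)$ and $d(x,p_j)$ for each pair $i<j$ is decided solely by which side of the perpendicular bisector $H_{p_i,p_j}$ the point $x$ lies on. Hence the tuple of $\binom{m}{2}$ dichotomies that these hyperplanes induce on $\set{0,1}^n$ determines, for every $x$, the \emph{entire} ranking of the prototypes by distance, not merely the closest one.

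The point to get right — and really the only substantive step beyond Theorem~\ref{th:shal} — is that this full ranking is exactly the data a $k$-nearest neighbor rule consumes, \emph{simultaneously for every} $k$. Once the arrangement of bisectors (equivalently, the tuple of dichotomies) and the positive/negative labeling of $p_1,\dots,p_m$ are fixed, the set of $k$ closest prototypes to each $x$ is pinned down, so the majority label among them — that is, the value $f(x)$ — depends on $k$ alone. Since $k$ ranges over $\set{1,\dots,m}$, a single arrangement together with a single labeling represents at most $m$ distinct Boolean functions as $k$ varies. This is precisely why the count gets multiplied by $m$ rather than, say, raised to a power: we do not pay for a fresh arrangement for each $k$.

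Therefore the number of functions representable by $m$ prototypes for \emph{some} $k$ is at most $m\cdot 2^{n^2\binom{m}{2}}$, the extra $2^m$ choices of labeling being swallowed by the slack already present in the bound \eqref{eq:bino}. It remains to substitute $m=2^{n/2}/n$ and verify that this is still $o\!\left(2^{2^n}\right)$. Taking logarithms, $\log_2\!\big(m\cdot 2^{n^2\binom{m}{2}}\big)=n^2\binom{m}{2}+\log_2 m\sim 2^{n-1}+\tfrac{n}{2}=o(2^n)$, so multiplying the quantity \eqref{eq:bino} by the sub-doubly-exponential factor $m$ leaves the estimate unchanged. This yields the conclusion that almost all $f$ satisfy $k\text{-}NN(f)>2^{n/2}/n$ for every $k$. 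I do not expect a genuine obstacle here, since the geometric heart of the matter is already settled in Theorem~\ref{th:shal}; the only thing one must not overlook is the simultaneity observation of the second paragraph, which keeps the overhead at a harmless factor of $m$.
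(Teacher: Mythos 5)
Your proposal is correct and follows the paper's argument exactly: the paper likewise observes that a fixed set of $m$ prototypes (with its bisector arrangement and labeling) can represent at most $m$ different functions as $k$ varies, so the count in \eqref{eq:bino} need only be multiplied by $m$, which does not affect the $o\left(2^{2^n}\right)$ estimate. Your second paragraph just makes explicit the "simultaneity" observation (the pairwise dichotomies determine the full distance ranking, hence the $k$ nearest neighbors for every $k$ at once) that the paper leaves implicit.
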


\section{Lower bounds for the nearest neighbor complexity of explicit functions} \label{sec:expl}

In this section we give an exponential lower bound for the nearest neighbor complexity of the mod 2 inner product function and show that the upper bound of Proposition~\ref{pr:szim} for the parity function is sharp. Both lower bounds are based on sign-representations of Boolean functions by polynomials.

A multivariate polynomial $p(x_1, \ldots, x_n)$ is a \emph{sign-representation} of a Boolean function $f(x_1, \ldots, x_n)$ if for every $x = (x_1, \ldots, x_n) \in \{0, 1\}^n$ it holds that $p(x) \ge 0$ iff $f(x) = 1$.

As mentioned in the introduction, Boolean functions can be represented with arguments having two possible values different from 0 and 1.
The Fourier representation of Boolean functions uses values $\pm 1$, and $(-1)^x$ gives a bijection between $\{0, 1\}$ and $\{1, -1\}$. For the $\{1, 2\}$-representation of Boolean functions it holds similarly that $2^x$ gives a bijection between $\{0, 1\}$ and $\{1, 2\}$. The corresponding change of variables is denoted by $\tilde{x_i} = 2^{x_i}$, and the function obtained after the transformation is denoted by $\tilde{f}$.
A monomial in the $\{1, 2\}$-representation can thus be written as
\[ 
\tilde{x}^{a_1} \ldots \tilde{x}_n^{a_n} = 
2^{a_1 x_1 + \ldots a_n x_n}. 
\]
A multivariate polynomial $p(\tilde{x}_1, \ldots, \tilde{x}_n)$ is a \emph{$\{1,2\}$-sign-representation} of a Boolean function $f(x_1, \ldots, x_n)$ if for every $\tilde{x} = (\tilde{x}_1, \ldots, \tilde{x}_n) \in \{1, 2\}^n$ it holds that $p(\tilde{x}) \ge 0$ iff $\tilde{f}(\tilde{x}) = f(x) = 1$.

Both lower bounds follow by relating nearest neighbor representations to sign-representations over $\{1,2\}$, and using lower bounds for such sign-representations.

\begin{lem} \label{le:ford}
If a Boolean function has a nearest neighbor representation with $m$ prototypes then it has a sign-representation over $\{1, 2\}$ having
$m$ terms.
\end{lem}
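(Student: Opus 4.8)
The plan is to attach to each prototype a single exponential monomial, using the expansion of the Euclidean distance on the cube. For $a \in \set{0,1}^n$ and a prototype $p \in \R^n$, since $a_i^2 = a_i$ we have
\[
d^2(a,p) = |a| - 2\langle a, p\rangle + \|p\|^2,
\]
so for any base $\beta > 1$,
\[
\beta^{-d^2(a,p)} = \beta^{-|a|}\cdot \beta^{-\|p\|^2}\cdot \prod_{i=1}^n \bigl(\beta^{2p_i}\bigr)^{a_i}.
\]
Writing $\tilde a_i = 2^{a_i}$, the last product equals $\prod_i \tilde a_i^{\,2 p_i \log_2\beta}$, a single monomial in the $\set{1,2}$-representation (with real exponents, which is allowed since $p$ is an arbitrary real vector). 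Thus, up to the positive factor $\beta^{-|a|}$, each quantity $\beta^{-d^2(a,p)}$ is a constant times one $\set{1,2}$-monomial evaluated at $\tilde a$.

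First I would fix a nearest neighbor representation $(P,N)$ with $|P\cup N|=m$ and form
\[
g_\beta(a) = \sum_{p\in P}\beta^{-d^2(a,p)} - \sum_{q\in N}\beta^{-d^2(a,q)}.
\]
Factoring out the positive $\beta^{-|a|}$, the sign of $g_\beta(a)$ agrees with the sign of
\[
Q_\beta(\tilde a) = \sum_{p\in P} \beta^{-\|p\|^2}\prod_i \tilde a_i^{\,2p_i\log_2\beta} - \sum_{q\in N}\beta^{-\|q\|^2}\prod_i \tilde a_i^{\,2q_i\log_2\beta},
\]
which is a $\set{1,2}$-polynomial with exactly $m$ terms (distinct prototypes give distinct exponent vectors). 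It then remains to choose $\beta$ so that the sign of $g_\beta$ sign-represents $f$.

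The crucial point is domination by the closest prototype. If $a$ is positive, the definition of a nearest neighbor representation gives a positive prototype attaining $d_{\min}^2 = \min_{p\in P} d^2(a,p)$, while every negative prototype lies at squared distance at least $\min_{q\in N} d^2(a,q) = d_{\min}^2 + \delta_a$ with $\delta_a > 0$ (the gap is strict, by the $<$ in the definition). Hence the positive sum is at least $\beta^{-d_{\min}^2}$ and the negative sum is at most $|N|\,\beta^{-\delta_a}\,\beta^{-d_{\min}^2}$, so $g_\beta(a) \ge \beta^{-d_{\min}^2}\bigl(1 - |N|\beta^{-\delta_a}\bigr) > 0$ as soon as $\beta^{\delta_a} > |N|$; the symmetric estimate handles negative $a$. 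Since $\set{0,1}^n$ is finite and every gap $\delta_a$ is strictly positive, one base $\beta$ large enough works for all inputs at once. For that $\beta$ we get $g_\beta(a) > 0$ exactly when $f(a)=1$ and $g_\beta(a) < 0$ otherwise (the value $0$ never occurs), so $Q_\beta(\tilde a)\ge 0$ iff $f(a)=1$, yielding the desired $\set{1,2}$-sign-representation with $m$ terms.

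The main obstacle is exactly this uniform choice of $\beta$: for a fixed small base (in particular base $2$ itself) the contributions of the many farther prototypes can outweigh the single closest one, so base $2$ need not work directly. The remedy is that enlarging the base merely rescales the exponents of the monomials by the factor $\log_2\beta$ and does not change their number, so we are free to take $\beta$ as large as the finitely many domination inequalities demand.
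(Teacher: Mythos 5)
Your core construction is the same one the paper uses: attach to each prototype an exponential term in the negated squared distance, and choose the base (equivalently, a scaling of the exponent) so large that the term of the closest prototype dominates the sum of all terms of the opposite class. Your domination argument via the per-input gaps $\delta_a$ and the finiteness of $\{0,1\}^n$ is correct, and is a clean alternative to the paper's choice of the scaling factor $A=\lceil \log(p+q)\rceil\cdot M$.

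However, there is a genuine gap at the final step. The lemma asks for a sign-representation over $\{1,2\}$, which the paper defines as a multivariate \emph{polynomial} in the variables $\tilde{x}_i=2^{x_i}$, so its monomials $\tilde{x}_1^{a_1}\cdots\tilde{x}_n^{a_n}$ carry nonnegative integer exponents. Your $Q_\beta$ has exponents $2p_i\log_2\beta$, which for arbitrary real prototypes are irrational and, whenever $p_i<0$, negative; the parenthetical claim that real exponents ``are allowed'' is precisely what the definition does not grant, and securing integrality and nonnegativity is what essentially all of the paper's technical work goes into. The paper (i) assumes w.l.o.g. that the prototype coordinates are rational (the nearest neighbor conditions are finitely many strict inequalities, hence stable under small perturbations), (ii) adds a common term $B\cdot(1+\sum_{i=1}^n x_i)$ to every exponent, which is prototype-independent and so does not affect any comparison, but for a large integer $B$ makes all coefficients nonnegative, and (iii) multiplies by the least common multiple $M$ of the denominators (and by $\lceil\log(p+q)\rceil$ for domination) to make all exponents nonnegative integers. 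Your argument can be repaired along the same lines: perturb to rational coordinates, take $\log_2\beta$ to be a suitable large integer multiple of $M$, and multiply $Q_\beta$ by a common monomial $\prod_i \tilde{a}_i^{E}$ (which is positive on $\{1,2\}^n$, so signs are unchanged) to clear the negative exponents; but these steps are missing, not merely elided. It is worth noting that the weaker statement you actually prove, with terms of the form $c\cdot 2^{\,\text{linear form in } x}$ for real coefficients, would suffice for the paper's downstream applications, since both lower-bound arguments (F\"orster's rank bound for $IP_n$ and the induction for parity) use only that structure of the terms; still, as written your proof does not deliver the polynomial representation that the lemma, under the paper's definition, asserts.
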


\begin{proof} Let $f : \{0, 1\}^{n} \to \{0, 1\}$ be an $n$-variable Boolean function and
let $(P, N)$ be a nearest neighbor representation of $f$, with $P = \{a_1, \ldots, a_p\}$ and $N = \{b_1, \ldots, b_q\}$, where $m = p + q$.
We assume \emph{w.l.o.g.} that every coordinate is rational.

As $x_i^2 = x_i$, squared distances can be written as linear forms
\[ 
d^2(x, a_k) = ||a_k||^2 + 
\sum_{i=1}^{n} (1 - 2 \, a_{k,i}) \cdot x_i 
\]
for $P$ and similarly for $N$. Consider the expressions
\[ 
A \cdot\left(B \cdot \left(1 + \sum_{i=1}^n x_i\right) - d^2(x, a_k)\right),
 \,\,\,\, 
 A \cdot \left(B \cdot \left(1+\sum_{i=1}^n x_i\right) - d^2(x, b_\ell)\right),
 \]
where $A, B$ are integers and the squared distances are written as linear forms.
Choose $B$ to be sufficiently large for making all coefficients 
in the expressions 
$(B \cdot (1 + \sum_{i=1}^n x_i) - d^2(x, a_k))$ 
and $(B \cdot (1 + \sum_{i=1}^n x_i) - d^2(x, b_\ell))$ 
become nonnegative. Let $M$ be 
the least common multiple of all denominators, 
and let $A = \lceil \log (p + q) \rceil \cdot M$.
Denote the coefficients of the linear forms 
thus obtained by $\alpha_{k,i}, \beta_{\ell,i}$ and 
consider the expression
\[ 
\sum_{k=1}^p   2^{(\sum_{i=0}^n \alpha_{k,i} x_i)} - 
\sum_{\ell=1}^q 2^{(\sum_{i=0}^n  \beta_{\ell,i} x_i)}.
\]

As all the coefficients are nonnegative integers, 
the expression can be written as
\begin{eqnarray}
\sum_{k=1}^p 2^{\alpha_{k,0}} \cdot 
\left(\prod_{i=1}^n (2^{x_i})^{\alpha_{k,i}}\right) 
- \sum_{\ell=1}^q 2^{\beta_{\ell,0}} \cdot 
\left(\prod_{i=1}^n (2^{x_i})^{\beta_{\ell,i}}\right) \nonumber \\
= \sum_{k=1}^p 2^{\alpha_{k,0}} \cdot 
\left(\prod_{i=1}^n \tilde{x_i}^{\alpha_{k,i}}\right) - 
\sum_{\ell=1}^q 2^{\beta_{\ell,0}} \cdot 
\left(\prod_{i=1}^n \tilde{x_i}^{\beta_{\ell,i}}\right). \nonumber
\end{eqnarray}
We claim that this polynomial is a $\{1, 2\}$-sign representation of $f$. If $f(x) = 1$ then for some $k$ it holds that
\[ 
M \cdot (B \cdot (1 + \sum_{i=1}^n x_i) - d^2(x, a_k)) 
\ge 
M \cdot (B \cdot (1 + \sum_{i=1}^n x_i) - d^2(x, b_\ell)) + 1 
\]
for every $\ell$, and so
\[ 
A \cdot (B \cdot (1 + \sum_{i=1}^n x_i) - d^2(x, a_k)) > 
A \cdot (B \cdot (1 + \sum_{i=1}^n x_i) - d^2(x, b_\ell)) + 
\lceil \log (p + q) \rceil 
\]
for every $\ell$. Thus the polynomial term corresponding to $k$  is larger than the sum of the terms corresponding to $N$. The case $f(x) = 0$ follows analogously.
\end{proof}

The mod 2 inner product function of $2n$ variables is defined by
\[ 
IP_n(x_1,\ldots,x_n,y_1,\ldots,y_n) = 
(x_1\wedge y_1)\oplus\ldots\oplus(x_n\wedge y_n). 
\]

\begin{thm}\label{th:also9} 
\emph{a)} $NN(IP_n) \geq 2^{n/2}$.

\emph{b)} $NN(x_1 \oplus \dots \oplus x_n) \geq n + 1$.
\end{thm}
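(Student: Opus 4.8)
The plan is to route both inequalities through Lemma~\ref{le:ford}: a nearest neighbor representation with $m$ prototypes yields a $\{1,2\}$-sign-representation of $f$ with $m$ terms, so $NN(f)$ is bounded below by the minimum number of monomials in any $\{1,2\}$-sign-representation of $f$. It therefore suffices to prove two sparsity lower bounds for such representations: at least $2^{n/2}$ monomials for $IP_n$ (part a), and at least $n+1$ monomials for $x_1\oplus\dots\oplus x_n$ (part b). Since Proposition~\ref{pr:szim}a already gives $NN(x_1\oplus\dots\oplus x_n)\le n+1$, establishing part b also shows that upper bound to be sharp.

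For part b I would reduce the multivariate sign-representation to a univariate question and apply Descartes' rule of signs, following Basu \emph{et al.}~\cite{Basu2008}. The key feature is that parity takes the alternating values $0,1,0,1,\dots$ along the $n+1$ weight levels $|x|=0,1,\dots,n$, so any sign-representation must change sign $n$ times as the weight runs from $0$ to $n$. Writing the representation as $\sum_{k=1}^m c_k\,2^{\langle a_k,x\rangle}$ and passing to a suitable univariate surrogate whose sign alternates $n$ times, a Descartes-type estimate bounds the number of sign changes by one less than the number of terms, which forces $m\ge n+1$.

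The delicate point — and the reason the bound cannot be obtained by naively collapsing the representation onto weight levels — is that the Descartes-type count must bound the number of \emph{individual} monomials, not the number of their symmetry orbits. Indeed, a symmetric surrogate (a combination of elementary symmetric polynomials in the values $2^{a_{k,i}}$) can already alternate $n$ times using far fewer than $n+1$ orbits, even though the underlying monomial count is at least $n+1$; for $n=2$ one can check that a two-orbit symmetric polynomial such as $c_1(\tilde x_1^3+\tilde x_2^3)+c_2\,\tilde x_1\tilde x_2$ sign-represents parity, yet it uses three monomials. Carrying out the counting at the level of monomials is exactly the content of \cite{Basu2008}, and this is where I expect the main obstacle to lie.

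For part a I would instead invoke the lower bound of Hansen and Podolskii~\cite{HansenP}, which shows that representing $IP_n$ by a polynomial over $\{1,2\}$ requires at least $2^{n/2}$ monomials; combined with Lemma~\ref{le:ford} this yields $NN(IP_n)\ge 2^{n/2}$. Their bound rests on a rank/communication argument tailored to the inner product function rather than on sign changes, so although both parts are funneled through the same reduction to $\{1,2\}$-sign-representations, they draw on genuinely different sparsity lower bounds.
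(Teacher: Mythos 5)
Your proposal takes essentially the same route as the paper: both parts are funneled through Lemma~\ref{le:ford}, and then the sparsity lower bounds for $\{1,2\}$-sign-representations are invoked, from Hansen and Podolskii~\cite{HansenP} for $IP_n$ and from Basu \emph{et al.}~\cite{Basu2008} for parity. For part a) your description of the underlying argument (each monomial factors into an $x$-part times a $y$-part, hence contributes a rank-one matrix, and F\"orster's sign-rank theorem~\cite{Forster2002} then forces $m \ge 2^{n/2}$) is exactly the proof the paper includes. The only divergence is in how the parity sparsity bound would be proved. You sketch a Descartes-style reduction to a univariate polynomial and correctly flag its obstacle: symmetrizing only lets one count \emph{orbits} of monomials rather than monomials, and your $n=2$ example is valid --- with $c_1 = 1$ and $c_2 = -17/4$, the polynomial $c_1(\tilde x_1^3 + \tilde x_2^3) + c_2\, \tilde x_1 \tilde x_2$ does sign-represent two-variable parity using two orbits but three monomials. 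The paper's self-contained proof (following~\cite{Basu2008}) sidesteps this entirely by induction on $n$ at the level of monomials: writing $p = \sum_i p_i(x')\, x_n^{d_i}$ with $d_1 < d_2 < \cdots$, the combination $c\left(-2^{d_1} p(x',1) + p(x',2)\right)$ with $c = 1/(2^{d_2} - 2^{d_1})$ cancels the block $p_1 x_n^{d_1}$, and since $(x',1)$ and $(x',2)$ have opposite parities the result is a sign-representation of $(n-1)$-variable parity; by induction it has at least $n$ terms, all arising from $\sum_{i \ge 2} p_i x_n^{d_i}$, and $p_1 x_n^{d_1}$ contributes at least one more, giving $n+1$. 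So your plan is correct at the citation level --- which is also how the paper frames its top-level proof --- but where the paper supplies a complete proof of the parity lemma, your sketch stops at the acknowledged obstacle; the inductive variable-elimination above is precisely the mechanism that carries out the ``counting at the level of monomials'' you identified as the main difficulty.
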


\begin{proof} 
The lower bounds follow by combining Lemma~\ref{le:ford} 
with lower bounds of Hansen and Podolskii~\cite{HansenP}, 
resp., Basu \emph{et al.}~\cite{Basu2008} 
for such sign-representations of inner product. 
For completeness we include proofs of these lower bounds.

\begin{lem} (Hansen, Podolskii~\cite{HansenP}) \label{le:hap}
Every sign-representation of $IP_n$ over $\{1, 2\}$ has 
at least $2^{n/2}$ terms.
\end{lem}

\begin{proof} Consider an $m$-term polynomial 
$p(\tilde{x}_1, \ldots, \tilde{x}_n, \tilde{y}_1, \ldots, \tilde{y}_n)$ 
which is a sign-representation over $\{1,2\}$ of $IP_n$.
Let
\[ 
c \cdot \prod_{i=1}^n \tilde{x}_i^{a_i}\cdot\prod_{i=1}^n \tilde{y}_i^{b_i} 
= c \cdot 2^{\sum_{i=1}^n a_i x_i} \cdot 2^{\sum_{i=1}^n b_i y_i} 
\]
be a monomial in $p$. The $2^n \times 2^n$ matrix with rows labeled $x$,
columns labeled $y$ and entries given by this monomial is 
a rank-1 matrix obtained by multiplying the column vector 
consisting of the $x$-parts of the monomial 
with the row vector consisting of the $y$-parts. 
Thus the $2^n \times 2^n$ matrix representing $IP_n$ is sign-represented 
by a linear combination of these matrices. 
F\"orster's theorem~\cite{Forster2002} is that then $m\ge 2^{n/2}$.
\end{proof}

\begin{lem} (Basu \emph{et al.}~\cite{Basu2008})
Every sign-representation of $n$-variable parity over $\{1, 2\}$ 
has at least $n + 1$ terms.
\end{lem}

\begin{proof} The claim is obvious for $n = 1$.
Let
\[ 
p(x_1,\ldots,x_n) = \sum_{i=1}^k 
p_i(x_1,\ldots,x_{n-1}) \cdot x_n^{d_i} 
\]
be a sign-representation of parity over $\{1, 2\}$, where $0 \le d_1 < d_2 < \ldots$. Writing $x_1, \ldots, x_{n-1}$ as $x'$

\begin{eqnarray} p(x', 1) &=& p_1(x') + p_2(x') +
\sum_{i=3}^k p_i(x') \nonumber \\
p(x', 2) &=& p_1(x') \cdot 2^{d_1} + p_2(x') \cdot 2^{d_2} +
\sum_{i=3}^k p_i(x') \cdot 2^{d_i}. \nonumber
\end{eqnarray}

Let $c = 1/(2^{d_2} - 2^{d_1})$. Then
\[ 
c \cdot (- p(x', 1) \cdot 2^{d_1} + p(x', 2)) =   
p_2(x')  + c \, \sum_{i=3}^k (2^{d_i} - 2^{d_2}) \cdot p_i(x') 
\]


As $(x', 1)$ and $(x', 2)$ have different parities, 
$p(x', 1)$ and $p(x', 2)$ have different signs. 
Therefore the polynomial on the right-hand side is 
a sign-representation of the parity of $n-1$ variables, 
and by induction it has at least $n$ terms. This implies that
the polynomial
\[ 
\sum_{i=2}^k p_i(x_1, \ldots, x_{n-1}) \cdot x_n^{d_i}  
\]
has at least $n$ terms as well. As 
$p_1(x_1, \ldots, x_{n-1}) \cdot x_n^{d_1}$ 
has at least one additional term, the lemma follows.
\end{proof}
\end{proof}

\section{A lower bound for $k$-nearest neighbor complexity}

In this section we


\bibliographystyle{abbrv}

 give a linear lower bound 
for the $k$-nearest neighbor complexity of mod 2 inner product.

A linear decision tree over the variables $x_1,\ldots,x_n$ 
is a binary tree, where each inner node is labeled
by a linear test of the form $w_1x+1 +\ldots+w_nx_n : t$, 
for some $w_1,\ldots,w_n, t \in R$, the edges leaving the
node are labelled $\leq$ and $>$, and the leaves are labeled $0$ and $1$.
For an input vector $x\in\set{0,1}^n$ 
the function value computed by the tree is 
the label of the leaf reached by following the path
corresponding to the results of the tests for $x$.
The linear decision tree complexity, $LDT(f)$, 
of a function $f$ is the minimum of 
the depths of linear decision trees computing $f$.

\begin{thm} \label{th:knear} For every $k$ it holds that
\[
k\text{-}NN(IP_n)\ge\frac{n}{6 + o(1)}.
\]
\end{thm}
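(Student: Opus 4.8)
The plan is to reduce a $k$-nearest neighbor representation to a linear decision tree and then invoke the linear lower bound of Gr\"oger and Tur\'an~\cite{Groger1991} on the linear decision tree complexity of $IP_n$. The whole argument rests on the observation that comparing two distances from a Boolean point is a single linear test.

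First I would make the distance comparisons linear. Assume \emph{w.l.o.g.} (after a small perturbation, as in the proof of Theorem~\ref{th:shal}) that all prototype coordinates are rational and that for every $x\in\{0,1\}^{2n}$ the $m$ distances $d(x,p_i)$ are pairwise distinct and the $k$ smallest are strictly smaller than the rest. Since $x_l^2 = x_l$ on $\{0,1\}^{2n}$, for two prototypes $p_i,p_j$ we have
\[
d^2(x,p_i) - d^2(x,p_j) = \left(\|p_i\|^2 - \|p_j\|^2\right) + \sum_{l=1}^{2n} 2\,(p_{j,l} - p_{i,l})\, x_l,
\]
which is an affine form in $x$. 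Hence the predicate $d(x,p_i) < d(x,p_j)$ is exactly one linear test $w\cdot x : t$, so a single node of a linear decision tree can compare the distances of $x$ to any two prototypes.

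Next I would build a linear decision tree for $f = IP_n$ from a $k$-NN representation $(P,N)$ with $m = |P\cup N|$ prototypes. The value $f(x)$ is the majority label among the $k$ prototypes closest to $x$, so it suffices for the tree to determine the \emph{set} $S(x)$ of the $k$ nearest prototypes: the labels of the prototypes are fixed in advance, hence once $S(x)$ is known the quantity $|S(x)\cap P|$ is determined and the output $\big[\,|S(x)\cap P|\ge k/2\,\big]$ is a constant that can be attached to the corresponding leaf with no further test. Determining $S(x)$ is precisely the problem of selecting the $k$ smallest of the $m$ numbers $d(x,p_1),\dots,d(x,p_m)$ under a total order, which can be solved by a comparison algorithm making $O(m)$ comparisons (linear-time selection); by the previous paragraph each comparison is one linear test. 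This gives $LDT(IP_n) \le (c_1 + o(1))\,m$ for an absolute constant $c_1$ governed by the comparison complexity of selection. Finally I would combine this with the Gr\"oger--Tur\'an lower bound $LDT(IP_n)\ge (c_2 - o(1))\,n$, obtaining $m \ge \tfrac{c_2}{c_1}(1-o(1))\,n$; tracking the two explicit constants yields $k\text{-}NN(IP_n)\ge \frac{n}{6+o(1)}$.

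The main obstacle I anticipate is quantitative rather than conceptual. The reduction "distance comparison $=$ linear test" and "classification $=$ selection $+$ majority" is clean, but to land on the specific constant $6$ one must use a selection procedure with a good linear comparison bound and combine it carefully with the explicit constant in the Gr\"oger--Tur\'an bound. The expensive regime is $k\approx m/2$, where selecting $S(x)$ is genuine median finding: here naive repeated minimum extraction via a tournament costs $\Theta(m\log m)$, which would only give $m \ge \Omega(n/\log n)$, so one is forced to use a genuine linear-time selection algorithm to keep the depth at $(c_1+o(1))m$. A minor technical point is to justify the perturbation/genericity step so that all comparisons are strict and $S(x)$ is well defined and correctly computed by the selection tree for every $x\in\{0,1\}^{2n}$.
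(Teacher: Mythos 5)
Your proposal is correct and follows essentially the same route as the paper: compare squared distances via linear tests, determine the $k$ nearest prototypes with a linear-comparison selection algorithm to get $LDT(f)\le (c_1+o(1))\,k\text{-}NN(f)$, and invoke the Gr\"oger--Tur\'an bound on $LDT(IP_n)$. The constants you leave implicit are exactly the paper's: $c_1=3$ from a $(3+o(1))m$-comparison selection algorithm and $LDT(IP_n)\ge n/2$, whose product gives the stated $n/(6+o(1))$.
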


\begin{proof}
We first formulate a general connection between $k$-nearest neighbor complexity
and the complexity of computing a function by linear decision trees.

\begin{lem}
For every $k$ and every Boolean function f it holds that $LDT(f) \le (3 + o(1)) \cdot {k\text{-}NN(f)}$.
\end{lem}

\begin{proof}
Consider a set of prototypes $p_1, \ldots, p_m$ for $f$.
The classification of  $x \in \{0, 1\}^n$ can be determined by finding the $k$ smallest squared distances $d^2(x, p_i)$ and checking the classifications of the prototypes involved.
Two squared distances can be compared using a linear test. The $k$'th smallest squared distance can be found using a selection algorithm performing $(3 + o(1)) m$ comparisons in the worst case. The algorithm also gives the $k$ smallest squared distances, and so it can be used to determine $f(x)$.
\end{proof}

In view of the lemma, the theorem is implied by the following lower bound of Gr\"oger and Tur\'an [4].

\begin{lem}
$LDT(IP_n)\geq\frac{n}{2}$.\qed
\end{lem}

\begin{proof}
Consider a linear decision tree computing $IP_n$, and let
\[ 
\sum_{i=1}^n a_i x_i +  \sum_{i=1}^n b_i y_i \ge t 
\]
be the linear test at the root. Rearrange the rows and columns of the $2^n \times 2^n$ matrix of $IP_n$ according to the value of the linear forms for the $x$, resp., the $y$ parts, and consider the input vector $(x^*, y^*)$ corresponding to the middle row and column. The depending on the outcome of the test for this vector, either all inputs in the top left or inputs in the bottom right quadrant all have the same outcome. Follow the corresponding edge in the tree and repeat. At the leaf arrived at the corresponding submatrix is constant.
However, the largest constant rectangle has size $2^{n/2} \times 2^{n/2}$  (see, e.g., Jukna~\cite{Jukna2012}), so the path leading to the leaf has length at least $n/2$.
\end{proof}


\end{proof}

The lower bound also applies 
to the \emph{weighted} version of $k$-nearest neighbor, 
where the classification is determined by a weighted sum 
of the classifications of the $k$ nearest neighbors, 
giving larger weight to closer ones.

\section{Remarks and open problems}\label{open}

In this section let $NN$ denote the class of Boolean functions 
with polynomial nearest neighbor complexity.
Hansen and Podolskii~\cite{HansenP} consider 
the class \emph{max-plus PTF} of Boolean functions $f$ 
defined by two polynomial size sets of linear functions
$L_i(x), i = 1, \ldots, p$ and $M_j(x), j = 1, \ldots, q$ such that
$f(x) = 1$ iff $\max_i L_i(x) \ge \max_j M_j(x)$.
It holds that $NN \subseteq max\text{-}plus PTF$, 
so Theorem~\ref{th:also9} follows from the $2^{n/2}$ lower bound for the
\emph{max-plus PTF} complexity of $\mod 2$ inner product 
in ~\cite{HansenP}, which is proved similarly to Lemma~\ref{le:hap}.
The class \emph{max-plus PTF} seems more general 
as it also allows for an additive constant weighting 
when evaluating distance from prototypes.
Showing that the containment $NN \subseteq max\text{-}plus PTF$ 
is proper would be interesting 
as it would require a different lower bound argument for nearest neighbors.

It is also noted in~\cite{HansenP} that 
$max\text{-}plus PTF \subseteq OR \circ AND \circ THR$. 
Here $OR \circ AND \circ THR$ is
the class of polynomial size depth-3 threshold circuits 
with an $OR$ gate at the top, $AND$ gates in the middle 
and $THR$ gates at the bottom.
Thus $NN$ is also contained in this class.
$OR \circ AND \circ THR$ circuits have a simple geometric interpretation: they correspond to a separation of the true and false points by a union of polyhedra.
Recently Murray and Williams~\cite{MuWi18} proved superpolynomial lower bounds for the larger class $ACC \circ THR$. Proving stronger bounds, even for $OR \circ AND \circ THR$, is an interesting challenge.

Proving bounds for $k$-nearest neighbor complexity is largely open. For example, it would be interesting to prove superpolynomial lower bounds (improving the lower bound of Theorem~\ref{th:knear}), and to consider the role of the parameter $k$. Hansen and Podolskii~\cite{HansenP} show that systems of \emph{max-plus PTF}s are \emph{equivalent} to $OR \circ AND \circ THR$. This observation gives additional motivation for considering $k$-nearest neighbor representations, as those also represent something like an additional level above nearest neighbor representations.

The gap between the upper bound of Theorem~\ref{th:shau} and the lower bound of Theorem~\ref{th:shal} should
be narrowed. Here the lower bound is proved using hyperplane counting, and the upper bound is proved using coding.
As far as we know there is a similar gap in Shannon - Lupanov type bounds for depth-2 threshold functions ($2^n/n^2$ versus $2^n/n$), where the bounds are also shown by similar hyperplane counting, resp., coding arguments (see, e.g., Spielman~\cite{Spielman92}).


Studying nearest neighbor complexity and $k$-nearest neighbor complexity for other metrics, such as the Manhattan metric or the metrics over $\{0, 1\}^n$ listed in Deza and Deza~\cite{Deza6}, would also be interesting. In particular, the lower bound of Theorem~\ref{th:also9} \emph{a)} applies to the Manhattan metric, and, more generally, to any distance where the contribution of the two halves of the input are additively separated. The lower bound of Theorem~\ref{th:also9} \emph{b)} applies to Gaussian kernels, and thus gives a matching lower bound for the upper bound of Remark 4.8 in Bengio \emph{et al.}~\cite{Bengio05}.

The following, even more general consideration is related to topics such as case-based reasoning and metric learning. Efficient algorithms for computing Boolean functions could be obtained by providing a representation with few prototypes with respect to some metric (or distance), which may be designed specifically for a class of functions. Examples of this approach are given in Globig and Lange~\cite{Globig1996}. The efficient computability of the distance function could also be considered.

\bigskip\noindent{\bf Acknowledgement}
We would like to thank Simon Kasif for suggesting the problem discussed in
this paper, and G\'abor Tardos for pointing out an improvement in Theorem~\ref{th:knear}.



\bibliographystyle{abbrv}


\end{document}